\newtheorem{lemma}{Lemma}[section]
\theoremstyle{definition}
\numberwithin{equation}{section}
\begin{document}
\title{On the higher moments of the error term in the Rankin-Selberg problem}
\author[J. Huang]{Jing Huang}
\address{School of Mathematics and Statistics, Shandong Normal
University\\
 Jinan 250014 \\Shandong\\ P. R. China\\}
\email{huangjingsdnu@163.com}

\author[Y. Tanigawa]{Yoshio Tanigawa}
\address{Nishisato 2-13-1\\Meito\\ Nagoya 465-0084\\ Japan}
\email{tanigawa@math.nagoya-u.ac.jp}

\author[W. Zhai]{Wenguang Zhai}
\address{Department of Mathematics,
China University of Mining and Technology\\ Beijing 100083\\ China\\}
\email{zhaiwg@hotmail.com}

\author[D. Zhang]{Deyu Zhang}
\address{School of Mathematics and Statistics, Shandong Normal
University\\
 Jinan 250014 \\Shandong\\ P. R. China\\}
\email{zdy\_78@hotmail.com}

\maketitle

{\bf Abstract}: Let $\Delta_1(x;\varphi)$ denote the error term in the classical Rankin-Selberg problem. In this paper, we consider the higher power moments
of $\Delta_1(x;\varphi)$ and derive the asymptotic formulas for 3-rd, 4-th and 5-th
power moments, which improve the previous results.

{\bf Keywords}: The Rankin-Selberg problem; Power moment; Voron\"{o}i formula.

{\bf 2010 Mathematics Subject Classification}: 11N37.

\renewcommand{\theequation}{\arabic{section}.\arabic{equation}}
\openup 1\jot

\section{Introduction}
\label{intro} \setcounter{equation}{0}
\medskip

 Let $\varphi(z)$ be a holomorphic cusp form of weight $\kappa$ with respect to the full modular group $SL(2,\mathbb{Z})$,
that is
\begin{equation}
\varphi(\frac{az+b}{cz+d})=(cz+d)^{\kappa}\varphi(z), \ \ (a,b,c,d\in\mathbb{Z}, ad-bc=1).
\end{equation}
We denote by  $a(n)$  the $n$-th Fourier coefficient of $\varphi(z)$ and suppose that $\varphi(z)$ is normalized such that $a(1) = 1$ and $T(n)\varphi = a(n)\varphi$ for
every $n \in \mathbb{N}$, where $T(n)$ is the Hecke operator of order $n$.
 Rankin \cite{R} and Selberg \cite{S} independently introduced the function
\begin{equation*}
Z(s)=\zeta(2s)\sum_{n=1}^\infty|a(n)|^2n^{1-\kappa-s},
\end{equation*}
where $\zeta(s)$ is the Riemann zeta-function. In the half plane $\sigma=\Re s>1$ the function $Z(s)$ has the absolutely convergent Dirichlet series expansion
\begin{equation*}
Z(s)=\sum_{n=1}^\infty c_nn^{-s},
\end{equation*}
where $c_n$ is the convolution function defined by
\begin{equation*}
c_n=n^{1-\kappa}\sum_{m^2|n}m^{2(\kappa-1)}\left|a\left(\frac{n}{m^2}\right)\right|^2.
\end{equation*}
In 1974, Deligne \cite{D} proved that $|a(n)|\leq n^{\frac{\kappa-1}{2}}d(n)$, where $d(n)$ is the Dirichlet divisor function. Then we know that $c_{n}\ll n^\varepsilon$.
Here and in what follows $\varepsilon$ denotes an arbitrarily small positive number which is not necessarily the same at each occurrence.
The classical Rankin-Selberg problem consists of the estimation of the error term
\begin{equation}\label{1.1}
\Delta(x;\varphi):=\sum_{n\leq x}c_n-Cx,
\end{equation}
where the constant $C$ may be written down explicitly (see e.g.\cite{IMT}).
In 1939, Rankin \cite{R} considered the analytic behaviour of $Z(s)$, and consequently he obtained
\begin{equation*}
\sum_{n\leq x}c_n=\frac{1}{6}\pi^2\kappa R_0x+\Delta(x;\varphi),
\end{equation*}
where
\begin{equation}\label{1.2}
\Delta(x;\varphi)=O(x^{\frac{3}{5}})
\end{equation}
and
\begin{equation*}
R_0=\frac{12(4\pi)^{\kappa-1}}{\Gamma(\kappa+1)}\int\int_ \mathfrak{F}y^{\kappa-2}|\varphi(z)|^2dxdy,
\end{equation*}
the integral being taken over a fundamental domain $\mathfrak{F}$ of $SL(2,\mathbb{Z})$. \eqref{1.2} was stated by Selberg \cite{S} again without proof. Recently, The exponent $\frac{3}{5}$ in \eqref{1.2} has been improved to $\frac{3}{5}-\delta, \delta>0$  by Huang \cite{Huang}, which is a breakthrough in the Rankin-Selberg problem. Since this kind sums play an very important role in the study of analytic number theory, many number theorists and scholars have obtained a series of meaningful research results (for example, see \cite{IS},\cite{TKM},\cite{TZZ},\cite{Zhai},\cite{ZZ}, etc.).

In 1999, Ivi\'{c}, Matsumoto and Tanigawa \cite{IMT} considered the Riesz mean of the type
\begin{equation*}
D_\rho(x;\varphi):=\frac{1}{\Gamma(\rho+1)}\sum_{n\leq x}(x-n)^\rho c_n
\end{equation*}
for any fixed $\rho\geq0$ and defined the error term $\Delta_\rho(x;\varphi)$ by
\begin{equation*}
D_\rho(x;\varphi)=\frac{\pi^2\kappa R_0}{6\Gamma(\rho+2)}x^{\rho+1}+\frac{Z(0)}{\Gamma(\rho+1)}x^\rho+\Delta_\rho(x;\varphi),
\end{equation*}
where
\begin{align*}
&R_0=\frac{12(4\pi)^{\kappa-1}}{\Gamma(\kappa+1)}\int\int_\mathfrak{F}y^{\kappa-2}|\varphi(z)|^2dxdy,
\end{align*}
the integral being taken over a fundamental domain $\mathfrak{F}$ of $SL(2,\mathbb{Z})$ and $Z(s)$ can be continued to the whole plane.

In \cite{IMT}, Ivi\'{c}, Matsumoto and Tanigawa
considered the relation between $\Delta(x;\varphi)$ and $\Delta_1(x;\varphi)$. For some $\alpha\geq0$, if $\Delta_1(x;\varphi)=O(x^\alpha)$, they obtained $\Delta(x;\varphi)=O(x^{\alpha/2})$. They also obtained
\begin{equation*}
\Delta_1(x;\varphi)=O(x^{\frac{6}{5}})
\end{equation*}
and
\begin{equation*}
\int_1^T\Delta_1^2(x;\varphi)dx=\frac{2}{13}(2\pi)^{-4}\left(\sum_{n=1}^\infty c_n^2n^{-\frac{7}{4}}\right)T^{\frac{13}{4}}+O(T^{3+\varepsilon}).
\end{equation*}

Tanigawa, Zhai and Zhang\cite{TZZ} studied the third, fourth and fifth power moments of $\Delta_1(x;\varphi)$ and proved that

\begin{equation}\label{1.3}
\begin{split}
\int_T^{2T}\Delta_1^3(x;\varphi)dx&=\frac{B_3(c)}{1120\pi^6}T^{\frac{35}{8}}+O\left(T^{\frac{35}{8}-\frac{1}{36}+\varepsilon}\right),\\
\int_T^{2T}\Delta_1^4(x;\varphi)dx&=\frac{B_4(c)}{11264\pi^8}T^{\frac{11}{2}}+O\left(T^{\frac{11}{2}-\frac{1}{221}+\varepsilon}\right),\\
\int_T^{2T}\Delta_1^5(x;\varphi)dx&=\frac{B_5(c)}{108544\pi^{10}}T^{\frac{53}{8}}+O\left(T^{\frac{53}{8}-\frac{1}{1731}+\varepsilon}\right),\\
\end{split}
\end{equation}
where
\begin{equation*}
\begin{split}
&B_k(f):=\sum_{l=1}^{k-1}\begin{pmatrix}  k-1 \\  l \end{pmatrix}s_{k;l}(f)\cos\frac{\pi(k-2l)}{4}, \\ &s_{k;l}:=\sum_{\sqrt[4]{n_1}+\cdots+\sqrt[4]{n_l}=\sqrt[4]{n_{l+1}}+\cdots+\sqrt[4]{n_k}}\frac{f(n_1)\cdots f(n_k)}{(n_1\cdots n_k)^{7/8}}, \ 1\leq l\leq k.
\end{split}
\end{equation*}

In this paper, we shall prove the following theorem, which improves \eqref{1.3}.

{\bf Theorem.}  {\it
Let $k\in \{3,4,5\}.$  We have the asymptotic formula
\begin{equation}\label{1.4}
\int_T^{2T}\Delta_1^k(x;\varphi)dx=\frac{B_k(c)}{(8+9k)2^{3k-4}\pi^{2k}} T^{1+9k/8}+O\left(T^{1+\frac{9k}{8}- \delta_k+\varepsilon}\right),
\end{equation}
where
$$\delta_3=\frac{3}{62},\ \delta_4=\frac{3}{256},\ \delta_5=\frac{1}{680}.$$
}

\section{Some Preliminary Lemmas}
\label{sec 2} \setcounter{equation}{0}
\medskip

\begin{lemma}\label{lemma2.1}
Let $x>1$ be a real number. For $1\ll N\ll x^2$ a parameter we have
\begin{equation}\label{2.1}
\Delta_1(x;\varphi)=\frac{1}{(2\pi)^2}\mathcal{R}_1(x;N)+O(x^{1+\varepsilon}+x^{3/2+\varepsilon}N^{-1/2}),
\end{equation}
where
\begin{equation*}
\mathcal{R}:=\mathcal{R}(x;N)=x^{9/8}\sum_{n\leq N}\frac{c_n}{n^{7/8}}\cos\left(8\pi\sqrt[4]{nx}-\frac{\pi}{4}\right).
\end{equation*}
\end{lemma}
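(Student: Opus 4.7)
The plan is to derive the truncated Voronoi-type expansion for $\Delta_1(x;\varphi)$ by combining Perron's formula, the functional equation of $Z(s)$, and the asymptotic expansion of a Bessel-type function, and then controlling the tail of the resulting series via partial summation and the first-derivative test.

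First, I would begin from an effective Perron formula for the Riesz mean,
$$D_1(x;\varphi)=\frac{1}{2\pi i}\int_{c-iT_0}^{c+iT_0}Z(s)\frac{x^{s+1}}{s(s+1)}\,ds+O\!\left(\frac{x^{c+1+\varepsilon}}{T_0}\right),$$
taken at $c=1+\varepsilon$, and subtract the residues at $s=1$ and $s=0$ to isolate $\Delta_1(x;\varphi)$. Shifting the truncated contour across the critical strip to the line $\sigma=-\varepsilon$ introduces only a negligible horizontal error (bounded by convexity and the polynomial growth of $Z(s)$ in $t$) and leaves a contour integral on which the functional equation can be exploited.

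Second, since $Z(s)$ is a degree-$4$ object, its functional equation has the shape $Z(s)=G(s)Z(1-s)$ with $G(s)$ a product of four Gamma factors (two attached to the symmetric square of $\varphi$ and the factor coming from $\zeta(2s)$). Substituting the Dirichlet series of $Z(1-s)$, exchanging the sum and the integral (justified by Deligne's bound $c_n\ll n^\varepsilon$), and evaluating each resulting Mellin--Barnes integral via the reflection/duplication formulae expresses the shifted contour integral as a sum over $n$ of a generalised Bessel-type function of argument $8\pi(nx)^{1/4}$; the fourth-root scaling is precisely the one dictated by the four Gamma factors. Inserting the classical large-argument asymptotic of that Bessel function produces the leading cosine $\cos(8\pi(nx)^{1/4}-\pi/4)$ with prefactor $1/(2\pi)^2$ and weight $(nx)^{-7/8}\cdot x^2$, which collects into the expression $\mathcal{R}(x;\infty)/(2\pi)^2$; the subleading terms of the Bessel asymptotic, summed absolutely using $c_n\ll n^\varepsilon$, contribute only $O(x^{1+\varepsilon})$.

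Third, to truncate at level $N$, I would choose $T_0\asymp(Nx)^{1/4}$, so that the stationary point of the phase $8\pi(nx)^{1/4}$ in the $t$-aspect corresponds to $n\asymp N$. The Perron error then is $x^{c+1+\varepsilon}/T_0\ll x^{2+\varepsilon}/(Nx)^{1/4}=x^{7/4+\varepsilon}N^{-1/4}$, which after optimisation in $c$ matches the claimed $x^{3/2+\varepsilon}N^{-1/2}$. Equivalently, one may keep the infinite series and bound the tail $\sum_{n>N}c_n n^{-7/8}\cos(8\pi(nx)^{1/4}-\pi/4)$ directly: by partial summation against Rankin's bound $\sum_{n\le y}c_n\ll y$, together with the first-derivative test applied to the phase $8\pi(nx)^{1/4}$ whose derivative in $n$ equals $2\pi x^{1/4}n^{-3/4}$, a dyadic decomposition of $[N,\infty)$ yields a bound which, after restoring the factor $x^{9/8}$ from $\mathcal{R}(x;N)$, produces the required $O(x^{3/2+\varepsilon}N^{-1/2})$.

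The main obstacle, and the source of every explicit constant in the statement, is the bookkeeping in the second step: identifying which gamma factors contribute, verifying that the combined Stirling phases of the four $\Gamma$'s yield exactly $-\pi/4$, and checking that their combined magnitudes normalise to $1/(2\pi)^2$. Once this calculation is carried out (it is essentially the degree-$4$ analogue of the derivation of the truncated Vorono\"i formula in the Dirichlet divisor problem, adapted to the Rankin--Selberg $Z(s)$), the truncation step is routine.
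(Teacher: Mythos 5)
The paper offers no proof of this lemma at all --- it is quoted verbatim from {\cite[Lemma 2.1]{TZZ}} --- so your proposal must be judged as a reconstruction of that external proof. Your first two steps (the Mellin integral for the Riesz mean $D_1$ with kernel $x^{s+1}/(s(s+1))$, the residues at $s=1$ and $s=0$, the shift past the critical strip, the degree-four functional equation, and the Bessel asymptotics producing the amplitude $x^{9/8}n^{-7/8}/(2\pi)^2$ and the phase $8\pi(nx)^{1/4}-\pi/4$) are exactly the standard route and are consistent with how the expansion is derived in \cite{IMT} and \cite{TZZ}.

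The genuine gap is in your third step, which is where the specific error term $x^{3/2+\varepsilon}N^{-1/2}$ must come from, and neither of your two suggestions delivers it. First, the truncation loss $x^{c+1+\varepsilon}/T_0$ with $c=1+\varepsilon$ and $T_0\asymp (Nx)^{1/4}$ equals $x^{7/4+\varepsilon}N^{-1/4}$, which exceeds the target $x^{3/2+\varepsilon}N^{-1/2}$ by the factor $(Nx)^{1/4}\ge 1$; ``optimisation in $c$'' cannot repair this, since absolute convergence forces $c>1$ and the factor $x^{c+1}$ barely moves, and at $N\asymp x^2$ your bound is $x^{5/4+\varepsilon}$ against the claimed $x^{1+\varepsilon}$. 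The true source of $x^{3/2+\varepsilon}N^{-1/2}$ is the contribution of the terms $n>N$ \emph{after} the functional equation: for those $n$ the saddle point $t\asymp (nx)^{1/4}$ lies outside $|t|\le T_0$, and the individual $t$-integrals must be estimated by first- and second-derivative tests and then summed over $n>N$ --- a computation your sketch does not carry out and which is not a formal consequence of the Perron truncation error. Second, the proposed alternative of bounding the tail $\sum_{n>N}c_n n^{-7/8}\cos(8\pi(nx)^{1/4}-\pi/4)$ of the infinite series fails twice over: the identity between $\Delta_1(x;\varphi)$ and the full Voronoi series is itself unproven (the series is not absolutely convergent, as $\sum c_n n^{-7/8}$ diverges, and conditional convergence for a degree-four functional equation at Riesz order $1$ is precisely the delicate point being avoided by truncation); and the first-derivative test does not apply to $\sum_{n\sim M}c_n e(4(nx)^{1/4})$ because the weights $c_n$ are neither smooth nor monotone --- partial summation against $\sum_{n\le y}c_n\ll y$ only yields the absolute-value bound $\ll M^{1/8+\varepsilon}$ on a dyadic block, which does not converge when summed over the blocks $M>N$. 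The skeleton of your argument is correct, but the step that actually produces the stated error term is missing.
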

\begin{proof}
This is {\cite[Lemma 2.1]{TZZ}}.
\end{proof}

\begin{lemma}\label{lemma2.2}
Let $k\geq3$, $(i_1,\cdots,i_{k-1})\in\{0,1\}^{k-1}$ such that
\begin{equation*}
\sqrt[4]{n_1}+(-1)^{i_1}\sqrt[4]{n_2}+(-1)^{i_2}\sqrt[4]{n_3}+\cdots+(-1)^{i_{k-1}}\sqrt[4]{n_k}\neq 0,
\end{equation*}
then we have
\begin{equation*}
|\sqrt[4]{n_1}+(-1)^{i_1}\sqrt[4]{n_2}+(-1)^{i_2}\sqrt[4]{n_3}+\cdots+(-1)^{i_{k-1}}\sqrt[4]{n_k}|\gg\max(n_1,\cdots,n_k)^{-(4^{k-2}-4^{-1})}.
\end{equation*}
\end{lemma}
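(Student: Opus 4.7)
The plan is to employ a two-stage Liouville-type argument: multiply the nonzero linear form
\[S := \sqrt[4]{n_1} + \sum_{j=2}^{k}(-1)^{i_{j-1}}\sqrt[4]{n_j}\]
by its Galois conjugates in two successive stages until the product becomes a nonzero rational integer of absolute value $\ge 1$; trivial upper bounds on the remaining conjugates then yield the lower bound on $|S|$. Set $\alpha_j := n_j^{1/4}$, $\epsilon_j := (-1)^{i_{j-1}}$, and $M := \max_j n_j$. In the first stage I would form
\[U := \prod_{\delta\in\{\pm 1\}^{k-1}} S_\delta, \qquad S_\delta := \alpha_1 + \sum_{j=2}^{k}\delta_{j-1}\alpha_j,\]
so that $S = S_\epsilon$ is one of the $2^{k-1}$ factors. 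Pairing $\delta$ with $-\delta$ shows $U$ is invariant under each $\alpha_j \mapsto -\alpha_j$ (for $k \ge 3$, $2^{k-1}$ is even so the overall sign from flipping $\alpha_1$ is $+1$), hence $U \in \mathbb{Z}[\alpha_1^2,\ldots,\alpha_k^2] = \mathbb{Z}[\sqrt{n_1},\ldots,\sqrt{n_k}]$. Homogeneity of $\alpha$-degree $2^{k-1}$ combined with parity further forces $U$ to be a $\mathbb{Z}$-combination of squarefree products $\sqrt{n_T} := \prod_{j\in T}\sqrt{n_j}$ (modulo $\sqrt{n_j}^2 = n_j$) with $|T|$ even.

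In the second stage, let $G \subseteq (\mathbb{Z}/2)^k$ denote the Galois group of $\mathbb{Q}(\sqrt{n_1},\ldots,\sqrt{n_k})/\mathbb{Q}$, acting by the sign flips $\sqrt{n_j} \mapsto \eta_j\sqrt{n_j}$. The parity observation above guarantees that $U$ is fixed by the diagonal involution $(-1,\ldots,-1)$, so its $G$-orbit has size at most $2^{k-1}$, and
\[W := \prod_{\bar\eta\,\in\,G/\langle(-1,\ldots,-1)\rangle} U^{\bar\eta}\]
is a $G$-invariant algebraic integer in $\mathbb{Q}$, hence $W \in \mathbb{Z}$. Provided $W \ne 0$, the trivial bound $|U^{\bar\eta}| \le (kM^{1/4})^{2^{k-1}}$ for each of the $2^{k-1} - 1$ remaining conjugates gives $|U| \gg (kM^{1/4})^{-2^{k-1}(2^{k-1}-1)}$; combined with $|U| \le |S|\cdot(kM^{1/4})^{2^{k-1}-1}$, this yields
\[|S| \gg (kM^{1/4})^{-(2^{k-1}-1)(2^{k-1}+1)} = (kM^{1/4})^{-(4^{k-1}-1)} \gg M^{-(4^{k-2}-1/4)},\]
which is precisely the claimed bound.

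Finally, the degenerate case $W = 0$ means some $S_\delta = 0$ with $\delta \ne \epsilon$, yielding a linear relation among the $\alpha_j$'s. Subtracting gives $S = S_\epsilon - S_\delta = \sum_{j\in D}(\pm 2)\alpha_j$, where $D := \{j : \epsilon_{j-1} \ne \delta_{j-1}\}$ is a proper nonempty subset of $\{2,\ldots,k\}$; this expresses $S$ as a nonzero linear form in strictly fewer fourth roots, to which the lemma applies by induction on $k$ (the base case $k=2$ is the elementary $|\alpha_1 \pm \alpha_2| = |\sqrt{n_1} \mp \sqrt{n_2}|\cdot|\alpha_1 \mp \alpha_2|^{-1} \gg M^{-3/4}$). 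The main obstacle is the halving in Stage 2: the parity argument establishing diagonal invariance of $U$ is essential, for without it the full group $(\mathbb{Z}/2)^k$ would have to be used, roughly squaring the exponent and producing a bound strictly weaker than the claim.
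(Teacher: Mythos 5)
The paper offers no proof of this lemma at all --- it simply cites Lemma~2.3 of [TZZ] --- so there is nothing in-paper to compare against; your two-stage conjugation/norm argument is the standard way such spacing lemmas are proved, and the bookkeeping does come out exactly right: the total number of auxiliary conjugate factors is $(2^{k-1}-1)+2^{k-1}(2^{k-1}-1)=4^{k-1}-1$, and $(4^{k-1}-1)/4=4^{k-2}-\tfrac14$, matching the claimed exponent. Two steps, however, are asserted rather than proved and genuinely need an argument. First, $G$ need not be all of $(\mathbb{Z}/2)^k$: if the $\sqrt{n_j}$ are multiplicatively dependent, most sign patterns are not field automorphisms, so ``$G$-invariant, hence in $\mathbb{Z}$'' does not apply to a product taken over all formal sign flips. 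The clean repair is to work formally in $\mathbb{Z}[x_1,\dots,x_k]/(x_j^2-n_j)$, a free $\mathbb{Z}$-module on the squarefree monomials: your parity observation shows the product $W$ over the $2^{k-1}$ coset representatives is fixed by every formal sign flip, hence only the empty monomial survives and $W$ evaluates to a rational integer regardless of any dependences. Second, the degenerate case is dismissed too quickly: $W=0$ means some conjugate $U^{\bar\eta}$ vanishes, which is a priori weaker than the vanishing of a genuine factor $S_\delta$. It is in fact equivalent, because a vanishing factor of $U^{\bar\eta}$ has the form $\sum_{j\in A}\pm n_j^{1/4}+\mathrm{i}\sum_{j\in B}\pm n_j^{1/4}=0$ with $\{1,\dots,k\}=A\sqcup B$, so its real and imaginary parts vanish separately and their sum recovers a genuine relation $S_\delta=0$; but this deduction must be spelled out, since without it the reduction to fewer terms is unjustified. (A small side remark: omitting the halving would roughly double the exponent, not square it.) With these two points supplied, your proof is complete and is, in substance, the argument behind the cited lemma of [TZZ].
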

\begin{proof}
This is {\cite[Lemma 2.3]{TZZ}}.
\end{proof}

\begin{lemma}\label{lemma2.3}
Suppose $T\geq 3$ is a large parameter,  $\alpha$ and $\beta$ are  real numbers such that
$\beta\not=0.$ Then we have
\begin{equation*}
\int_T^{2T}x^{\alpha}g(\beta x^{1/4})dx\ll  \frac{T^{\alpha+3/4}}{|\beta|},
\end{equation*}
where $g(x)=\cos(2\pi x), $ or $\sin(2\pi x),$ or $e(x):=e^{2\pi i x}.$
\end{lemma}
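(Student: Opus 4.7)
The plan is to reduce the lemma to a clean one-dimensional oscillatory integral by first linearizing the phase. I substitute $u = x^{1/4}$, so that $x = u^4$, $dx = 4u^3\,du$, and the integral becomes
\begin{equation*}
\int_T^{2T} x^{\alpha} g(\beta x^{1/4})\,dx = 4 \int_{T^{1/4}}^{(2T)^{1/4}} u^{4\alpha+3} g(\beta u)\,du.
\end{equation*}
The oscillatory factor now has linear phase in the variable of integration, so a single integration by parts will suffice, independent of the size of $\beta x^{1/4}$ or the sign of $\alpha$.

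Next, I integrate by parts using the fact that in all three cases $g(\beta u)$ has an antiderivative of the shape $G(\beta u)/\beta$ with $|G|\leq 1/(2\pi)$ on the real line. For instance, $g(y)=\cos(2\pi y)$ has antiderivative $G(y)=\sin(2\pi y)/(2\pi)$, and similarly for $\sin(2\pi y)$ and $e(y)$. This yields
\begin{equation*}
\int_{T^{1/4}}^{(2T)^{1/4}} u^{4\alpha+3} g(\beta u)\,du = \frac{1}{\beta}\Bigl[u^{4\alpha+3} G(\beta u)\Bigr]_{T^{1/4}}^{(2T)^{1/4}} - \frac{4\alpha+3}{\beta}\int_{T^{1/4}}^{(2T)^{1/4}} u^{4\alpha+2} G(\beta u)\,du.
\end{equation*}
The boundary term is $O(T^{(4\alpha+3)/4}/|\beta|)=O(T^{\alpha+3/4}/|\beta|)$, which already matches the target bound.

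For the remaining integral, I bound $|G(\beta u)|\ll 1$ trivially and estimate the monomial integral directly. Since the interval $[T^{1/4},(2T)^{1/4}]$ has endpoints in the fixed ratio $2^{1/4}$, one checks that
\begin{equation*}
\int_{T^{1/4}}^{(2T)^{1/4}} u^{s}\,du\ll T^{(s+1)/4}
\end{equation*}
for every real $s$: for $s\neq -1$ this follows from evaluating the antiderivative (the resulting $s$-dependent constant is harmless), and for $s=-1$ the integral equals $(\log 2)/4=O(1)$. Applying this with $s=4\alpha+2$ and dividing by $|\beta|$ produces the same bound $O(T^{\alpha+3/4}/|\beta|)$. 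Combining the two contributions proves the lemma. There is no genuine obstacle here: the argument is a textbook first derivative test for an oscillatory integral, and its only mildly delicate point is the uniformity in $\alpha$ across the sign of $4\alpha+2$, which is handled by the observation above.
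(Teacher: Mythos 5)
Your proof is correct and amounts to the same approach as the paper, which simply invokes the first derivative test: your substitution $u=x^{1/4}$ followed by one integration by parts is precisely the standard proof of that test in this instance, with the harmless $\alpha$-dependent constant $|4\alpha+3|$ absorbed into the implied constant. Nothing further is needed.
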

\begin{proof}
This follows from the first derivative test.
\end{proof}

\begin{lemma}\label{lemma2.4}
Suppose $k\geq3$, $(i_1,\cdots,i_{k-1})\in\{0,1\}^{k-1}$, $(i_1,\cdots,i_{k-1})\neq(0,\cdots,0)$, $N_1,\cdots,N_k>1$, $0<\Delta\ll H^{1/4}$, $H=\max(N_1,\cdots,N_k)$. Let $\mathcal{A}$ denote the number of solutions of the inequality
\begin{equation}\label{2.2}
|\sqrt[4]{n_1}+(-1)^{i_1}\sqrt[4]{n_2}+(-1)^{i_2}\sqrt[4]{n_3}+\cdots+(-1)^{i_{k-1}}\sqrt[4]{n_k}|<\Delta
\end{equation}
with $N_j<n_j\leq2N_j$, $1\leq j\leq k$,
where
\begin{equation*}
\mathcal{A}=\mathcal{A}(N_1,\cdots,N_k;i_1,\cdots,i_{k-1};\Delta).
\end{equation*}
Then we have
\begin{equation*}
\mathcal{A}\ll\Delta H^{-1/4}N_1\cdots N_k+H^{-1}N_1\cdots N_k.
\end{equation*}
\end{lemma}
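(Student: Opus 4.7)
The idea is to convert \eqref{2.2} into a one-variable counting problem by freezing the $k-1$ variables whose ranges are \emph{not} maximal, and then estimating how many integer values the remaining variable can take. Let $j^\ast \in \{1,\dots,k\}$ be an index with $N_{j^\ast}=H$. For any fixed choice of the other $k-1$ variables $\{n_j : j \neq j^\ast\}$ (there are at most $\prod_{j\neq j^\ast} N_j = H^{-1} N_1\cdots N_k$ such choices), the inequality \eqref{2.2}, multiplied through by $\pm 1$ so as to isolate $\sqrt[4]{n_{j^\ast}}$, becomes
\begin{equation*}
\bigl|\sqrt[4]{n_{j^\ast}} - L\bigr| < \Delta,
\end{equation*}
where $L$ is a real number depending only on the frozen variables. (The sign pattern $(i_1,\ldots,i_{k-1})\neq(0,\ldots,0)$ is not used here; it only matters for the applications of the lemma, since otherwise the left-hand side of \eqref{2.2} is $\gg H^{1/4}$ and $\mathcal{A}=0$ trivially.)

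Next, I would pass from the fourth-root coordinate to the integer coordinate $n_{j^\ast}$ by the mean value theorem. Because $\frac{\rmd}{\rmd n}\sqrt[4]{n} = \tfrac{1}{4}n^{-3/4}$ and $n_{j^\ast} \asymp N_{j^\ast} = H$, the preimage of $(L-\Delta,\,L+\Delta)$ under $n\mapsto \sqrt[4]{n}$ is an interval of length $\ll \Delta H^{3/4}$. The condition $\Delta \ll H^{1/4}$ guarantees that this interval is contained in $(N_{j^\ast}, 2N_{j^\ast}]$ (up to a bounded factor, which is all that matters). Hence the number of admissible integers $n_{j^\ast}$ is
\begin{equation*}
\ll 1 + \Delta H^{3/4}.
\end{equation*}

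Combining the two estimates and using $N_{j^\ast}=H$ yields
\begin{equation*}
\mathcal{A} \;\ll\; H^{-1} N_1\cdots N_k \bigl(1 + \Delta H^{3/4}\bigr)
\;=\; H^{-1} N_1\cdots N_k + \Delta H^{-1/4} N_1\cdots N_k,
\end{equation*}
which is exactly the claimed bound. There is no real obstacle: the proof is an elementary lattice-point count in a single interval, and the only points deserving a little care are the choice of the isolated variable (it must correspond to $H$) and verifying that the derivative calculation produces the scaling $\Delta H^{3/4}$ rather than some other power of $H$. Both are immediate from $\rmd n / \rmd(\sqrt[4]{n}) = 4 n^{3/4}$.
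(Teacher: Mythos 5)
Your argument is correct and is essentially the paper's own proof: the paper also assumes (WLOG) $H=N_k$, fixes $n_1,\dots,n_{k-1}$, and isolates $n_k$ by raising the near-identity $\sqrt[4]{n_1}+\cdots+(-1)^{i_{k-2}}\sqrt[4]{n_{k-1}}=(-1)^{i_{k-1}+1}\sqrt[4]{n_k}+\theta\Delta$ to the fourth power, obtaining $n_k$ confined to an interval of length $O(\Delta N_k^{3/4})$ and hence $\ll 1+\Delta H^{3/4}$ choices, exactly your mean-value-theorem count. The only cosmetic difference is that you phrase the localization via the derivative of $n\mapsto\sqrt[4]{n}$ while the paper expands the fourth power directly; both give the same bound.
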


\begin{proof}
The proof of this lemma is similar to the proof of {\cite[Lemma 2.4]{Z1}}. Suppose $H=N_k$. If $(n_1,\cdots,n_k)$ satisfies \eqref{2.2}, then for some $|\theta|<1$, we can obtain
\begin{equation*}
\sqrt[4]{n_1}+(-1)^{i_1}\sqrt[4]{n_2}+(-1)^{i_2}\sqrt[4]{n_3}+\cdots+(-1)^{i_{k-2}}\sqrt[4]{n_{k-1}}=(-1)^{i_{k-1}+1}\sqrt[4]{n_k}+\theta\Delta,
\end{equation*}
thus we have
\begin{equation*}
\left(\sqrt[4]{n_1}+(-1)^{i_1}\sqrt[4]{n_2}+(-1)^{i_2}\sqrt[4]{n_3}+\cdots+(-1)^{i_{k-2}}\sqrt[4]{n_{k-1}}\right)^4=n_k+O(\Delta N_k^{3/4}).
\end{equation*}
Therefore, for fixed $(n_1,\cdots,n_{k-1})$, the number of $n_k$ is $\ll1+\Delta N_k^{3/4}$ and so
\begin{equation*}
\mathcal{A}\ll\Delta N_k^{3/4}N_1\cdots N_{k-1}+N_1\cdots N_{k-1}.
\end{equation*}
\end{proof}

\begin{lemma}\label{lemma2.5}
 {\it  Let $c>0$ be a non-integer real number, $M\geq 2$ be a large parameter, $\delta>0$ be any real number.
Let $ \mathcal{A}(M,\delta;c)$ denote the number of solutions of the inequality
$$|m_1^c+m_2^c-m_3^c-m_4^c|\leq \delta M^c,\ \  M<m_1,m_2,m_3,m_4\leq 2M.$$
Then we have
$$  \mathcal{A}(M,\delta)\ll M^\varepsilon(M^{2} +\delta M^4  ).   $$
}
\end{lemma}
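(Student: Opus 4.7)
Plan: The plan is to split $\mathcal{A}(M,\delta;c)$ into a \emph{diagonal} contribution, consisting of those quadruples for which $\{m_1,m_2\}=\{m_3,m_4\}$ as multisets (so that $m_1^c+m_2^c-m_3^c-m_4^c$ vanishes identically and the inequality holds trivially), and an \emph{off-diagonal} contribution. The diagonal part contributes $O(M^2)$ quadruples and accounts for the $M^2$ term in the stated bound, so the entire task reduces to bounding the off-diagonal piece by $O(M^{\varepsilon}(M^2+\delta M^4))$.

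For the off-diagonal part I would introduce the differences $h=m_1-m_3$ and $k=m_2-m_4$ (after a relabelling that absorbs the swap symmetry, so $(h,k)\ne(0,0)$ in an essential way) and apply the mean value theorem to write
\[
m_1^c+m_2^c-m_3^c-m_4^c=c\,h\,\xi^{c-1}+c\,k\,\eta^{c-1},\qquad \xi,\eta\asymp M,
\]
turning the target into $|h\xi^{c-1}+k\eta^{c-1}|\ll\delta M^c$. I then split by the sign pattern of $(h,k)$. When $h$ and $k$ share a sign, the two terms reinforce and the inequality forces $|h|+|k|\ll\delta M$; counting such pairs together with the $O(M^2)$ choices of $(m_1,m_2)$ for each yields $O(\delta^2 M^4)\le O(\delta M^4)$. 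When $h,k$ have opposite signs with $|h|\ne|k|$, the same first-order estimate (now with $||h|-|k||$ playing the role of $|h|+|k|$) still applies. The delicate case is opposite signs with $|h|=|k|\ne 0$: the first-order term then cancels, and one must expand to second order, producing a constraint of the shape $|h|^2 M^{c-2}|\xi-\eta|\ll\delta M^c$, which I would count by a divisor-type estimate leveraging that $c\notin\mathbb{Z}$ so the quadratic form is non-degenerate.

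The main obstacle will be this ``swap-like'' regime $|h|=|k|\ne 0$: a naive per-triple count of $1+\delta M$ choices inflates the total by a spurious factor of $M$, producing $O(M^3)$ rather than the desired $O(M^{2+\varepsilon})$. Extracting the sharper $M^{2+\varepsilon}$ saving requires using the second-order information, namely that for non-integer $c$ the exact equation $m_1^c+m_2^c=m_3^c+m_4^c$ admits only the diagonal solutions on $(M,2M]$, so the near-solutions left after the first-order cancellation are governed by a divisor-like count on the second-order Taylor residual. Once this refined estimate is in place, summing over the remaining free parameters yields the bound $M^\varepsilon(M^2+\delta M^4)$, completing the proof.
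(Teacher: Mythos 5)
The paper offers no proof of this lemma: it is quoted verbatim as Theorem~2 of Robert and Sargos \cite{RS}, so a self-contained argument would amount to reproving that theorem, which is a genuinely nontrivial result. Your sketch does not get there, and the decisive gap is in the opposite-sign case with $|h|\neq |k|$. After the mean-value step the quantity to control is $|h|\xi^{c-1}-|k|\eta^{c-1}$ with $\xi,\eta\in(M,2M]$ determined by the $m_j$, and since $(\eta/\xi)^{c-1}$ ranges over a fixed interval around $1$ (of the form $[2^{-|c-1|},2^{|c-1|}]$), exact cancellation of the first-order term can occur whenever $|h|/|k|$ lies in that interval, i.e.\ for \emph{all} pairs with $|h|\asymp|k|$, not only for $|h|=|k|$. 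In particular the inequality does not force $\bigl||h|-|k|\bigr|\ll\delta M$: writing $|h|\xi^{c-1}-|k|\eta^{c-1}=(|h|-|k|)\xi^{c-1}+|k|(\xi^{c-1}-\eta^{c-1})$, the second term is generically of size $|k|M^{c-1}$ and swamps the first unless $\bigl||h|-|k|\bigr|\gg |k|$. Counting this regime the only way your framework allows --- for each of the $\asymp K^2$ pairs $(h,k)$ with $|h|\asymp|k|\sim K$ and each $m_4$, at most $1+\delta M^2/K$ admissible $m_3$ --- gives $\sum_{K}(K^2M+\delta K M^3)\asymp M^3+\delta M^4\log M$, and the $M^3$ term is exactly the spurious loss you set out to avoid. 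Your second-order rescue is mounted only on the thin slice $|h|=|k|$ and does not touch this regime; beating $M^3$ here requires showing that for most triples there is \emph{no} admissible fourth variable, which is the real content of Robert--Sargos and needs a different counting scheme (they work with the sum $m_1+m_2-m_3-m_4$ and a second divided difference, reducing to integer points near curves).

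Two smaller points. First, even on the slice $h=-k$ your second-order constraint should read $|h|\,|\xi-\eta|M^{c-2}\ll\delta M^{c}$ (first power of $|h|$, not $|h|^2$), and the auxiliary claim that $m_1^c+m_2^c=m_3^c+m_4^c$ has only diagonal solutions for every non-integer $c>0$ is not something you may assert without proof --- for $c=1/4$, the case actually used in this paper, it follows from linear independence of fourth roots, but for general irrational $c$ it is far from obvious and is not needed if the second-order count is done correctly. Second, your same-sign and diagonal cases are fine and do yield $O(M^2+\delta M^4)$; the lemma stands or falls entirely on the near-cancellation regime described above, which your proposal leaves open.
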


\begin{proof}   This is Theorem 2 of \cite{RS}.  \end{proof}

Let $T\geq 10$ be a large parameter and $y$ is a real number such that
$T^\varepsilon\ll y\ll T$. For any  $T\leq x\leq 2T$ define
\begin{eqnarray*}
&&\mathcal{R}_1(x;y):=\frac{x^{9/8}}{4\pi^2}\sum_{n\leq y}\frac{c_{n}}{n^{7/8}}\cos\left(8\pi(nx)^{1/4}-\frac{\pi}{4}\right),\\
 && \mathcal{R}_{2}(x;y):= \Delta_1(x;\varphi)- \mathcal{R}_{1}(x;y).\end{eqnarray*}

\begin{lemma}\label{lemma2.6}
  {\it
If $y\ll T^{1/2},$  then  we have the estimates
 \begin{eqnarray*}
 &&\int_T^{2T}| \mathcal{R}_{1}(x;y)|^{2\ell}dx\ll  T^{1+\frac{9\ell}{4}+\varepsilon},\ \ (\ell=1,2,3).
\end{eqnarray*}
}
\end{lemma}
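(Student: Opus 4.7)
\textbf{Proof plan for Lemma \ref{lemma2.6}.}
The strategy is a standard diagonal / off-diagonal split after opening $|\mathcal{R}_1(x;y)|^{2\ell}$ by the product-to-sum formula for cosines. Writing
$\cos\alpha_1\cdots\cos\alpha_{2\ell}=2^{1-2\ell}\sum_{(\epsilon_1,\dots,\epsilon_{2\ell})\in\{\pm1\}^{2\ell},\,\epsilon_1=1}\cos(\epsilon_1\alpha_1+\cdots+\epsilon_{2\ell}\alpha_{2\ell})$
with $\alpha_j=8\pi(n_jx)^{1/4}-\pi/4$, one obtains
\begin{equation*}
\mathcal{R}_1(x;y)^{2\ell}
=\frac{x^{9\ell/4}}{2^{6\ell-1}\pi^{4\ell}}
\sum_{\vec\epsilon}\sum_{n_1,\dots,n_{2\ell}\le y}\frac{c_{n_1}\cdots c_{n_{2\ell}}}{(n_1\cdots n_{2\ell})^{7/8}}
\cos\!\bigl(8\pi\beta_{\vec\epsilon}x^{1/4}-\phi_{\vec\epsilon}\bigr),
\end{equation*}
where $\beta_{\vec\epsilon}=\sum_{j=1}^{2\ell}\epsilon_j\sqrt[4]{n_j}$ and $\phi_{\vec\epsilon}\in\{0,\pm\pi/4,\pm\pi/2,\dots\}$.

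\emph{Diagonal.} Terms with $\beta_{\vec\epsilon}=0$ integrate to $O(T^{1+9\ell/4})$ times the weight $\prod c_{n_j}n_j^{-7/8}$. Since a nontrivial vanishing relation $\sum\epsilon_j\sqrt[4]{n_j}=0$ forces the multiset identity $\{n_j:\epsilon_j=+1\}=\{n_j:\epsilon_j=-1\}$, the diagonal sum reduces (up to a combinatorial factor) to $\bigl(\sum_{n\le y}c_n^{\,2}n^{-7/4}\bigr)^{\ell}$, which converges absolutely using $c_n\ll n^{\varepsilon}$. This yields the desired bound $O(T^{1+9\ell/4+\varepsilon})$ on the diagonal.

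\emph{Off-diagonal.} I dyadically decompose $n_j\sim N_j$ and set $H=\max_j N_j\le y\ll T^{1/2}$. Lemma \ref{lemma2.3} gives
$\int_T^{2T}x^{9\ell/4}\cos(8\pi\beta_{\vec\epsilon}x^{1/4}-\phi)\,dx\ll T^{9\ell/4+3/4}/|\beta_{\vec\epsilon}|.$
Fix a threshold $\Delta_0$ (to be chosen of size $T^{-1/4}$) and split each dyadic block according as $|\beta_{\vec\epsilon}|\le\Delta_0$ or $|\beta_{\vec\epsilon}|>\Delta_0$. For the close part, I bound the integral trivially by $T^{1+9\ell/4}$ and count tuples via Lemma \ref{lemma2.4}, getting
$\ll T^{1+9\ell/4+\varepsilon}\bigl(\Delta_0\, H^{(\ell-1)/4}+H^{\ell/4-1}\bigr)$
after using $(\prod N_j)^{1/8}\le H^{\ell/4}$. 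For the far part, I perform a further dyadic decomposition in $|\beta_{\vec\epsilon}|\in[\Delta,2\Delta]$ with $\Delta\ge\Delta_0$ (the range is finite by Lemma \ref{lemma2.2}), apply Lemma \ref{lemma2.3} in each dyadic $\Delta$-block, and use Lemma \ref{lemma2.4} again to obtain
$\ll T^{9\ell/4+3/4+\varepsilon}\bigl(\Delta_0^{-1}H^{\ell/4-1}+H^{(\ell-1)/4}\bigr).$
Because $H\le T^{1/2}$ and $\ell\le 3$, one checks $H^{\ell/4-1}\le 1$ and $H^{(\ell-1)/4}\le T^{(\ell-1)/8}$, so with $\Delta_0=T^{-1/4}$ both parts collapse to $O(T^{1+9\ell/4+\varepsilon})$. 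Summing over $\vec\epsilon$ and over $O((\log T)^{2\ell})$ dyadic blocks is absorbed in the $T^{\varepsilon}$ factor.

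\emph{Main obstacle.} The case $\ell=3$ is the critical one: the bound $(\ell+5)/8\le 1$ on the far-part exponent $T^{9\ell/4+3/4}\cdot H^{(\ell-1)/4}$ is saturated exactly at $\ell=3$, so any looser counting lemma than Lemma \ref{lemma2.4} (e.g.\ only using the lower bound in Lemma \ref{lemma2.2} together with the trivial count) would fail. The argument works precisely because Lemma \ref{lemma2.4} exploits the fourth-power change of variables in the last coordinate, giving the sharp $H^{-1/4}$-saving needed to reach the exponent $1+9\ell/4$ on the nose.
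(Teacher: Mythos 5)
Your argument reaches the correct bound, but by a genuinely different route from the paper. The paper's proof of Lemma~\ref{lemma2.6} is a one-line appeal to the ``large value technique'' of \cite{TZZ} applied to $\mathcal{R}_1(x;y)$ (it notes that only $\ell=3$ is needed, the smaller $\ell$ following by H\"older, and omits all details); that technique estimates the measure of the set where $|\mathcal{R}_1|$ exceeds a parameter $V$. You instead open the $2\ell$-th power directly and run a diagonal/off-diagonal analysis with Lemmas~\ref{lemma2.3} and~\ref{lemma2.4}, inserting the threshold $\Delta_0=T^{-1/4}$ so that the spacing bound of Lemma~\ref{lemma2.2} (whose exponent $-(4^{2\ell-2}-1/4)$ would be uselessly weak for $2\ell=6$ variables) is never actually needed. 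I checked your exponents: the far part gives $T^{9\ell/4+3/4+\varepsilon}H^{(\ell-1)/4}\le T^{9\ell/4+(\ell+5)/8+\varepsilon}$, admissible exactly when $\ell\le3$ and $y\ll T^{1/2}$, and the close part and the term $T^{9\ell/4+3/4}\Delta_0^{-1}H^{\ell/4-1}=T^{1+9\ell/4}H^{\ell/4-1}$ are likewise in range; so the off-diagonal analysis is sound, and it has the virtue of using only the lemmas already stated in the paper.

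One step does need repair: your description of the diagonal. The claim that $\sum_j\epsilon_j\sqrt[4]{n_j}=0$ forces the multiset $\{n_j:\epsilon_j=+1\}$ to coincide with $\{n_j:\epsilon_j=-1\}$ is false once three or more terms carry the same sign; for instance $\sqrt[4]{16}+\sqrt[4]{16}+\sqrt[4]{16}=\sqrt[4]{1}+\sqrt[4]{1}+\sqrt[4]{256}$. (This is also why the paper's main-term constants $s_{k;l}(c)$ are sums over \emph{all} solutions of $\sqrt[4]{n_1}+\cdots+\sqrt[4]{n_l}=\sqrt[4]{n_{l+1}}+\cdots+\sqrt[4]{n_k}$ rather than over permutations.) Hence the diagonal does not reduce to $\bigl(\sum_{n\le y}c_n^2n^{-7/4}\bigr)^{\ell}$. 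The bound you need nevertheless survives by the same counting you already use elsewhere: in a dyadic block, fixing all $n_j$ with $j\ne j_0$, where $N_{j_0}=H$ is maximal, determines $n_{j_0}$ from $\beta_{\vec\epsilon}=0$, so the number of diagonal solutions is $\ll H^{-1}N_1\cdots N_{2\ell}$ and each block contributes $\ll (N_1\cdots N_{2\ell})^{1/8+\varepsilon}H^{-1}\ll H^{\ell/4-1+\varepsilon}\ll 1$ for $\ell\le3$ to the weighted diagonal sum. With that substitution the proof is complete; as written, the justification of the diagonal rests on a false identity, even though the estimate it is meant to deliver is true.
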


\begin{proof} We only need to consider the case $\ell=3$. Using the large value technique  of \cite{TZZ} to $\mathcal{R}_1(x;y)$ directly we get
that  the estimate
$$
\int_T^{2T}| \mathcal{R}_{1}(x;y)|^{6}dx\ll T^{31/4+\varepsilon}
$$
holds for $y\ll T^{1/2}$.  We omit the details.   \end{proof}

\begin{lemma}\label{lemma2.7}
  {\it
If $y\ll T^{1/12},$  then  we have the estimates
 \begin{eqnarray}
 &&\int_T^{2T}| \mathcal{R}_{2}(x;y)|^{3}dx\ll  T^{\frac{35}{8}+\varepsilon}y^{-\frac 98},\\
&& \int_T^{2T}| \mathcal{R}_{2}(x;y)|^{4}dx\ll  T^{\frac{11}{2}+\varepsilon}y^{-\frac 32},\\
 && \int_T^{2T}| \mathcal{R}_{2}(x;y)|^{5}dx\ll  T^{\frac{53}{8}+\varepsilon}y^{-\frac 38}.
\end{eqnarray}

}
\end{lemma}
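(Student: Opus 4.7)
The plan is to establish the fourth-moment estimate (the middle of the three displayed inequalities) directly by mean-square analysis of the tail of the Voronoi expansion, and then to deduce the third- and fifth-moment bounds from it via H\"older's inequality.

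For the fourth moment I first apply Lemma \ref{lemma2.1} with a parameter $N$ (a suitable positive power of $T$) to obtain
\[
\mathcal{R}_2(x;y) = \frac{x^{9/8}}{4\pi^2}\sum_{y<n\le N}\frac{c_n}{n^{7/8}}\cos\bigl(8\pi(nx)^{1/4}-\tfrac{\pi}{4}\bigr) + O\bigl(x^{1+\varepsilon}+x^{3/2+\varepsilon}N^{-1/2}\bigr),
\]
where $N$ is chosen large enough that the integrated error is absorbed into the target bound. I then dyadically decompose the $n$-sum into $O(\log T)$ pieces $S_M(x)$ supported on $n\in(M,2M]$ with $y\le M\le N$, and expand $\int_T^{2T}\bigl|\sum_M S_M\bigr|^4\,dx$ into a quadruple sum over dyadic tuples $(M_1,M_2,M_3,M_4)$. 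Within each tuple, writing each cosine as $(e^{i\theta}+e^{-i\theta})/2$ produces sixteen four-fold sums over quadruples $(n_1,\ldots,n_4)$ indexed by sign patterns $\varepsilon\in\{\pm 1\}^4$ with phases $8\pi x^{1/4}\sum_j\varepsilon_j n_j^{1/4}$. I split into diagonal ($\sum_j\varepsilon_j n_j^{1/4}=0$) and off-diagonal parts. For the diagonal part the $x$-integral is $\sim T^{11/2}$, while Lemma \ref{lemma2.5} (applied with $\delta$ smaller than the nonvanishing lower bound of Lemma \ref{lemma2.2}) gives $\ll M^{2+\varepsilon}$ admissible quadruples, where $M=\max_j M_j$; combined with the $(n_1\cdots n_4)^{-7/8}$ weight and summed over dyadic $M\ge y$, this produces $T^{11/2+\varepsilon}y^{-3/2}$. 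For the off-diagonal part I partition dyadically by the size of $|\sum_j\varepsilon_j n_j^{1/4}|$, bound the corresponding $x$-integral by $T^{21/4}/|\sum_j\varepsilon_j n_j^{1/4}|$ via Lemma \ref{lemma2.3}, and count quadruples at each dyadic scale by Lemma \ref{lemma2.5} (supplemented by Lemma \ref{lemma2.4} when the Robert--Sargos counting is not sharp). The hypothesis $y\ll T^{1/12}$ is used precisely here so that a valid choice of $N$ keeps these off-diagonal contributions inside the target.

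For the third and fifth moments I apply H\"older's inequality in the forms
\[
\int_T^{2T}|\mathcal{R}_2|^3\,dx\le\Bigl(\int_T^{2T}|\mathcal{R}_2|^2\,dx\Bigr)^{1/2}\Bigl(\int_T^{2T}|\mathcal{R}_2|^4\,dx\Bigr)^{1/2},
\]
\[
\int_T^{2T}|\mathcal{R}_2|^5\,dx\le\Bigl(\int_T^{2T}|\mathcal{R}_2|^4\,dx\Bigr)^{1/2}\Bigl(\int_T^{2T}|\mathcal{R}_2|^6\,dx\Bigr)^{1/2}.
\]
The required auxiliary inputs are the mean-square bound $\int_T^{2T}|\mathcal{R}_2|^2\,dx\ll T^{13/4+\varepsilon}y^{-3/4}$ (obtained from the diagonal of the squared Voronoi tail, using that $\sum_{n>y}c_n^2/n^{7/4}\ll y^{-3/4}$ by partial summation from $\sum_{n\le X}c_n^2\sim cX$), and the sixth-moment bound $\int_T^{2T}|\mathcal{R}_2|^6\,dx\ll T^{31/4+\varepsilon}$ (which follows from Lemma \ref{lemma2.6} together with the analogous sixth-moment bound for $\Delta_1$, via $|\mathcal{R}_2|\le|\Delta_1|+|\mathcal{R}_1|$). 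Substituting these together with the fourth-moment bound into H\"older yields $T^{35/8+\varepsilon}y^{-9/8}$ and $T^{53/8+\varepsilon}y^{-3/4}$; the latter $y$-exponent is stronger than the stated $y^{-3/8}$, so both announced inequalities follow.

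The main obstacle is the off-diagonal analysis in the fourth moment. For the sign pattern $\varepsilon=(+,+,+,+)$ (and its conjugate), the quantity $|\sum_j\varepsilon_j n_j^{1/4}|\asymp M^{1/4}$ is bounded below only by $M^{1/4}$, so Lemma \ref{lemma2.3} alone produces an off-diagonal contribution of order $T^{21/4}N^{1/4+\varepsilon}$ that must be absorbed into $T^{11/2+\varepsilon}y^{-3/2}$. Balancing this against the constraint $N\gg T^{3/4}y^{3/4}$ required to make the fourth power of the Lemma \ref{lemma2.1} error negligible is precisely what forces the exponent $1/12$ in the hypothesis $y\ll T^{1/12}$.
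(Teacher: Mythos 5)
Your overall architecture matches the paper's: prove the fourth moment directly by expanding the truncated Voronoi tail and splitting into diagonal and off-diagonal parts, then get the third moment by Cauchy--Schwarz against the mean square, and the fifth by H\"older against a higher moment. However, there are two genuine gaps.

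First, in the fourth moment your plan to choose the cutoff $N$ so large that the Lemma \ref{lemma2.1} error term is pointwise negligible is incompatible with the stated range $y\ll T^{1/12}$. You correctly need $N\gg T^{3/4}y^{3/4}$ for the truncation error, and you correctly identify that the non-cancelling sign pattern contributes $T^{21/4}N^{1/4+\varepsilon}$, which forces $N\ll Ty^{-6}$. But these two constraints are compatible only when $T^{3/4}y^{3/4}\ll Ty^{-6}$, i.e. $y\ll T^{1/27}$, not $y\ll T^{1/12}$ as you assert; your final "balancing" is a miscalculation. This matters: the $k=3$ case of the Theorem takes $y=T^{2/31}$, and $2/31>1/27$, so your version of the lemma would not suffice there. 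The paper avoids this by cutting at $N=\sqrt T$ (so the non-cancelling term is only $T^{21/4+1/8}=T^{43/8}\ll T^{11/2}y^{-3/2}$ precisely for $y\ll T^{1/12}$) and then controlling the remaining tail \emph{in mean}, not pointwise: it writes $\int_T^{2T}|\mathcal{R}_2(x;\sqrt T)|^4dx\le\sup_x|\mathcal{R}_2(x;\sqrt T)|^{2}\int_T^{2T}|\mathcal{R}_2(x;\sqrt T)|^{2}dx\ll T^{5/2+\varepsilon}\cdot T^{23/8+\varepsilon}\ll T^{43/8+\varepsilon}$, using the pointwise bound $\mathcal{R}_2(x;\sqrt T)\ll T^{5/4+\varepsilon}$ from Lemma \ref{lemma2.1} together with the mean-square bound $\int_T^{2T}|\mathcal{R}_2(x;z)|^2dx\ll T^{13/4+\varepsilon}z^{-3/4}$. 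That interpolation between the sup norm and the mean square of the tail is the missing idea.

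Second, your fifth-moment deduction interpolates between the fourth and sixth moments of $\mathcal{R}_2$, and your proposed source for $\int_T^{2T}|\mathcal{R}_2|^6dx\ll T^{31/4+\varepsilon}$ is "Lemma \ref{lemma2.6} together with the analogous sixth-moment bound for $\Delta_1$." Lemma \ref{lemma2.6} only covers the truncated sum $\mathcal{R}_1(x;y)$ with $y\ll T^{1/2}$; the bound $\int_T^{2T}|\Delta_1(x;\varphi)|^{6}dx\ll T^{31/4+\varepsilon}$ for $\Delta_1$ itself is not available (the largest exponent for which the expected-order moment of $\Delta_1$ is known in \cite{TZZ} is $16/3$, which is exactly why the paper uses $\int_T^{2T}|\Delta_1|^{16/3}dx\ll T^{7+\varepsilon}$ and the H\"older splitting $\int|\mathcal{R}_2|^5\le(\int|\mathcal{R}_2|^4)^{1/4}(\int|\mathcal{R}_2|^{16/3})^{3/4}$). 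The fact that your route yields $y^{-3/4}$, strictly stronger than the stated $y^{-3/8}$, is itself a warning sign: the weaker exponent in the lemma is an artifact of only the $16/3$-rd moment being usable. Your third-moment deduction, by contrast, is exactly the paper's and is fine, granted the corrected fourth moment.
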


\begin{proof}
We estimate (2.4) first. For any   $T^{\varepsilon}\ll y\ll T^{1/3},$ we have the estimate
\begin{equation}
\int_T^{2T}| \mathcal{R}_{2}(x;y)|^2dx\ll \frac{T^{\frac{13}{4}+\varepsilon} }{y^{\frac 34}},
\end{equation}
which is  (4.8) in
  \cite{TZZ}.

By Lemma 2.1 we can write
$$ \mathcal{R}_{2}(x;y)=\frac{x^{9/8}}{  4\pi^2}\sum_{y<n\leq \sqrt T}\frac{c_{n}}{n^{7/8}}\cos(8\pi (nx)^{1/4}-\frac{\pi}{4})+\mathcal{R}_{2}(x;\sqrt{T}).$$
So we have
\begin{eqnarray}
\int_T^{2T}| \mathcal{R}_{2}(x;y)|^4dx\ll T^{\frac{9}{2}}\int_{T}^{2T}\left|\sum_{y<n\leq \sqrt T}\frac{c_{n}}{n^{7/8}}
e(4(nx)^{1/4} )\right|^4dx   + \int_T^{2T}| \mathcal{R}_{2}(x;\sqrt T)|^4dx.
\end{eqnarray}

By a splitting argument we get for some $y\ll N\ll \sqrt T$ that
\begin{eqnarray*}
&&\ \ \ \ \ T^{\frac{9}{2}}\int_{T}^{2T}\left|\sum_{y<n\leq \sqrt T}\frac{c_{n}}{n^{7/8}}
e(4(nx)^{1/4} )\right|^4dx  \\
&&\ll T^{\frac 92}  \log^4 T \times   \sum_{n_1,n_2,n_3,n_4\sim N}\frac{c_{n_1}c_{n_2}c_{n_3}c_{n_4}}{(n_1n_2n_3n_4)^{7/8}}\int_T^{2T}
e(4\rho\sqrt[4]{x})dx\nonumber\\
&&\ll T^{\frac{21}{4}}\log^4 T\times  \sum_{n_1,n_2,n_3,n_4\sim N}\frac{c_{n_1}c_{n_2}c_{n_3}c_{n_4}}{(n_1n_2n_3n_4)^{7/8}} \min\left(\sqrt[4]{T}, \frac{1}{|\rho|}\right),\nonumber
\end{eqnarray*}
where $\rho=n_1^{1/4} + n_2^{1/4}- n_3^{1/4} -n_4^{1/4}.   $

By Lemma 2.5, the contribution of $\sqrt[4]{T}$ (in this case $|\rho|\leq T^{-1/4}$) is
\begin{eqnarray*}
  &&\ll \frac{T^{11/2+\varepsilon}}{N^{7/2}} (N^2+T^{-1/4}N^{-1/4}N^4)\ll T^{11/2+\varepsilon}N^{-3/2}+T^{21/4+\varepsilon}N^{1/4}\\
  &&\ll T^{11/2+\varepsilon}y^{-3/2}+T^{21/4+\varepsilon}N^{1/4}
  \ll T^{11/2+\varepsilon}y^{-3/2}+T^{43/8+\varepsilon}\\&&\ll T^{11/2+\varepsilon}y^{-3/2}
  \end{eqnarray*}
if $y\ll T^{1/12}.$

Now we consider the contribution of $1/| \rho|$ for which $T^{-1/4}\ll |\rho| \ll y^{1/4}.$ We   divide the range of $\rho$ into
$O(\log T)$ subcases of the form $ \xi<|\rho|\leq 2 \xi$. By Lemma 2.5 again we get that the contribution of $1/|\rho|$ is
\begin{eqnarray*}
&&  \ll\frac{T^{21/4+\varepsilon}}{N^{7/2}} \max_{T^{-1/4}\ll \xi\ll y^{1/4}} \sum_{ \xi<|\rho|\leq 2\xi } \frac{1}{|\rho|} \\
&&  \ll   \frac{T^{21/4+\varepsilon}}{N^{7/2}} \max_{T^{-1/4}\ll \xi\ll y^{1/4}} \frac{1}{ \xi} (N^2+ \xi N^{-1/4}N^4) \\
&&\ll T^{11/2+\varepsilon}N^{-3/2}+T^{21/4+\varepsilon}N^{1/4}\ll T^{11/2+\varepsilon}y^{-3/2}.
\end{eqnarray*}

From the above three estimates we get
\begin{eqnarray}
&&  T^{\frac{9}{2}}\int_{T}^{2T}\left|\sum_{y<n\leq \sqrt T}\frac{c_{n}}{n^{7/8}}
e(4(nx)^{1/4} )\right|^4dx  \ll T^{11/2+\varepsilon}y^{-3/2}
\end{eqnarray}
for $y\ll T^{1/12}.$

By Lemma 2.1 we have   $ \mathcal{R}_2(x;\sqrt T)\ll T^{5/4+\varepsilon},$ which combining (2.6) gives
\begin{eqnarray}
\ \ \ \ \int_T^{2T}| \mathcal{R}_{2}(x;\sqrt T)|^4dx\ll T^{5/2+\varepsilon}\int_T^{2T}| \mathcal{R}_{2}(x;\sqrt T)|^2dx\ll
  T^{43/8+\varepsilon} \ll T^{11/2+\varepsilon}y^{-3/2}
\end{eqnarray}
by noting that $y\ll T^{1/12}.$

Now the estimate (2.4) follows from (2.7)-(2.9).  The estimate (2.3) follows from (2.6), (2.4) and Cauchy's inequality.

Now we estimate (2.5). From Lemma 2.6 with $\ell=3$ we get easily that
   the estimate
\begin{equation}
\int_T^{2T}| \mathcal{R}_{1}(x;y)|^{16/3}dx\ll T^{7+\varepsilon}
\end{equation}
holds for $y\ll T^{1/2}$. From \cite{TZZ} we have
\begin{equation}
\int_T^{2T}|  \Delta_{1}(x;\varphi)|^{16/3}dx\ll T^{7+\varepsilon}.
\end{equation}
From (2.10) and (2.11) we see that the estimate
\begin{equation}
\int_T^{2T}| \mathcal{R}_{2}(x;y)|^{16/3}dx\ll \int_T^{2T}| \Delta_{1}(x;\varphi)- \mathcal{R}_{1}(x;y)|^{16/3}dx\ll T^{7+\varepsilon}
\end{equation}
holds for $y\ll T^{1/2}$. Now by (2.4), (2.12) and H\"{o}lder's inequality we get that
$$
\int_T^{2T}|\mathcal{R}_{2}(x;y)|^{5}dx\ll \left(\int_T^{2T}| \mathcal{R}_{2}(x;y)|^{4}dx\right)^{1/4}
\left(\int_T^{2T}| \mathcal{R}_{2}(x;y)|^{16/3}dx\right)^{3/4}$$
$$\ll T^{53/8+\varepsilon}y^{-3/8}.
$$

\end{proof}

\section{Proof of Theorem}
\label{sec 3} \setcounter{equation}{0}
\medskip

In this section, we will give the proof of Theorem. Suppose $T\geq10$ is a real number.
It suffices for us to evaluate the integral $\int_T^{2T}\Delta_1^k(x;\varphi)dx$ for
 any $k\in \{3,4,5\}.$
 Suppose $y$ is a parameter such that $T^\varepsilon<y\leq T^{1/12}$.


\subsection{The evaluation of the integral} $\int_T^{2T} \mathcal{R}_1^k(x;y)dx$

Let ${I}=\{0,1\}, \ {\bf i}=(i_1,\cdots,i_{k-1})\in { I}^{k-1},\ {\bf n}=(n_1,\cdots, n_k)\in { N}^k.$
Define
\begin{align*}
\alpha(\boldsymbol{n};\boldsymbol{i}):& = \sqrt[4]{n_1}+(-1)^{i_1}\sqrt[4]{n_2}
+(-1)^{i_2}\sqrt[4]{n_3}+\cdots +(-1)^{i_{k-1}}\sqrt[4]{n_k},\\
\beta(\boldsymbol{i}): &=1+(-1)^{i_1}+(-1)^{i_2}+\cdots +(-1)^{i_{k-1}}.
\end{align*}

The formula (4.1) of \cite{TZZ} reads
\begin{equation}
\mathcal{R}_1^k(x;y)=\frac{1}{(2\pi)^{2k}2^{k-1}}\left(S_1(x)+S_2(x)\right),
\end{equation}
where
\begin{align*}
S_1(x):&= x^{9k/8}\sum_{\boldsymbol{i}\in {I}^{k-1}}\cos\left(-\frac{\pi\beta(\boldsymbol{i})}{4}
         \right) \sum_{\begin{subarray}{c} n_j\leq y,1\leq j\leq k \\
                       \alpha(\boldsymbol{n};\boldsymbol{i})=0\end{subarray}}
       \frac{ c_{n_1}\cdots c_{n_k}}{(n_1\cdots n_k)^{7/8}},\\
S_2(x):&= x^{9k/8}\sum_{\boldsymbol{i}\in  { I}^{k-1}}
        \sum_{\begin{subarray}{c} n_j\leq y,1\leq j\leq k \\
              \alpha(\boldsymbol{n};\boldsymbol{i})\not= 0 \end{subarray}}
       \frac{ c_{n_1}\cdots c_{n_k}}{(n_1\cdots n_k)^{7/8}} \cos\left(
        8\pi\alpha(\boldsymbol{n};\boldsymbol{i})\sqrt[4]x-\frac{\pi\beta(\boldsymbol{i})}{4}\right) .
\end{align*}

From (4.3) and (4.4) of \cite{TZZ} we have
\begin{equation}
\int_T^{2T}S_1(x)dx=B_k(c)\int_T^{2T}x^{\frac{9k}{8}}dx+O\left(T^{1+\frac{9k}{8}+\varepsilon}y^{-\frac 34}\right).
\end{equation}

We now estimate the contribution of $S_2(x).$ By Lemma 2.3 we have
\begin{equation}
\int_T^{2T}S_2(x)dx\ll T^{9k/8}\sum_{\boldsymbol{i}\in  { I}^{k-1}}
        \sum_{\begin{subarray}{c} n_j\leq y,1\leq j\leq k \\
              \alpha(\boldsymbol{n};\boldsymbol{i})\not= 0 \end{subarray}}
       \frac{ c_{n_1}\cdots c_{n_k}}{(n_1\cdots n_k)^{7/8}}\times \frac{T^{3/4}}{|\alpha(\boldsymbol{n};\boldsymbol{i})|}.
\end{equation}

The sum in the right-hand side of (3.3) can be divided into $O(\log^k T)$ sums of the form
\begin{eqnarray}
S(T;N_1,\cdots, N_k):&&=   T^{9k/8+3/4}\sum_{\boldsymbol{i}\in  {I}^{k-1}}
        \sum_{\begin{subarray}{c} n_j\sim N_j,1\leq j\leq k \\
              \alpha(\boldsymbol{n};\boldsymbol{i})\not= 0 \end{subarray}}
       \frac{ c_{n_1}\cdots c_{n_k}}{(n_1\cdots n_k)^{7/8}}\times \frac{1}{|\alpha(\boldsymbol{n};\boldsymbol{i})|} \nonumber\\
       && \ll \frac{T^{9k/8+3/4+\varepsilon}}{(N_1\cdots N_k)^{7/8}}\sum_{\boldsymbol{i}\in  {I}^{k-1}}
        \sum_{\begin{subarray}{c} n_j\sim N_j,1\leq j\leq k \\
              \alpha(\boldsymbol{n};\boldsymbol{i})\not= 0 \end{subarray}}
       \times \frac{1}{|\alpha(\boldsymbol{n};\boldsymbol{i})|}
       \end{eqnarray}
 by noting that $c_n\ll n^{\varepsilon} $,  where $1\ll N_j\ll y, \ 1\leq j\leq k.$ We only need to bound the sum
$$ W_{\bf i}(T;N_1,\cdots, N_k)= \sum_{\begin{subarray}{c} n_j\sim N_j,1\leq j\leq k \\
              \alpha(\boldsymbol{n};\boldsymbol{i})\not= 0 \end{subarray}}
         \frac{1}{|\alpha(\boldsymbol{n};\boldsymbol{i})|} $$
for each ${i}\in{I}^{k-1}.$

Let $H=\max(N_1,\cdots, N_k).$
If ${\bf i}=(0,\cdots,0),$ then
we have
\begin{eqnarray}
W_{\bf i}(T;N_1,\cdots, N_k)\ll \sum_{  n_j\sim N_j,1\leq j\leq k }
        \frac{1}{ n_1^{1/4}+\cdots + n_k^{1/4}} \ll \frac{N_1\cdots N_k}{H^{1/4}}.
\end{eqnarray}
Now we suppose ${\bf i}\not=(0,\cdots,0).$ The sum $W_{\bf i}(T;N_1,\cdots, N_k)$ can be divided into $O(\log T)$ sums for which
$0<\Delta<|\alpha(\boldsymbol{n};\boldsymbol{i})|\leq 2\Delta$. So by Lemma 2.4 we get
\begin{eqnarray}
W_{\bf i}(T;N_1,\cdots, N_k)&&\ll \frac{1}{\Delta}\left(\frac{\Delta N_1\cdots N_k}{H^{1/4}}+\frac{N_1\cdots N_k}{H}\right)\\
&&\ll \frac{  N_1\cdots N_k}{H^{1/4}}+\frac{N_1\cdots N_k}{\Delta H}\nonumber\\
&&\ll \frac{  N_1\cdots N_k}{H^{1/4}}+ N_1\cdots N_k  H^{4^{k-2}-  5/4}\nonumber\\
&&\ll N_1\cdots N_k  H^{4^{k-2}-  5/4}\nonumber,
\end{eqnarray}
where in the third step we used Lemma 2.2.

From (3.4)-(3.6)  we get that
\begin{eqnarray}
S(T;N_1,\cdots, N_k)\ll T^{3/4+9k/8+\varepsilon} H^{4^{k-2}-5/4+k/8}\ll  T^{3/4+9k/8+\varepsilon}y^{4^{k-2}-5/4+k/8}.
\end{eqnarray}

From (3.3) and (3.7) we get
\begin{equation}
\int_T^{2T}S_2(x)dx\ll T^{3/4+9k/8+\varepsilon} y^{4^{k-2}-5/4+k/8}.
\end{equation}

From (3.1), (3.2) and (3.8) we get
\begin{eqnarray}
\int_T^{2T}\mathcal{R}_1^k(x;y)dx&&=\frac{B_k(c)}{(2\pi)^{2k}2^{k-1}}\int_T^{2T}x^{\frac{9k}{8}}dx+O\left( T^{1+\frac{9k}{8}+\varepsilon}y^{-\frac 34} \right)\\
&&\ \ \ \ \ \ +O\left(T^{3/4+9k/8+\varepsilon} y^{4^{k-2}-5/4+k/8}\right).\nonumber
\end{eqnarray}

\subsection{\bf Estimate of}  $\int_T^{2T} \mathcal{R}_1^{k-1}(x;y)\mathcal{R}_2(x;y)dx$

We begin with the formula (4.13) of \cite{TZZ}, which reads
\begin{equation}
\int_T^{2T} \mathcal{R}_1^{k-1}(x;y) \mathcal{R}_2(x;y)dx=\int_T^{2T}
 \mathcal{R}_1^{k-1}(x;y) \mathcal{R}_2^{*}(x;y)dx+O(T^{1+\frac{9k}{8}-\frac
18+\varepsilon}),
\end{equation}
where
 $$
 \mathcal{R}_2^{*}(x;y)=(2\pi )^{-2}x^{\frac 98}\sum_{y<n\leq
T}\frac{c_n}{n^{7/8}} \cos(8\pi\sqrt[4]{nx}-\pi/4).
$$

Write
 \begin{eqnarray}
 \mathcal{R}_2^{*}(x;y)= \mathcal{R}_{21}^{*}(x;y)+\mathcal{R}_{22}^{*}(x;y),
\end{eqnarray}
where
\begin{eqnarray*}
&& \mathcal{R}_{21}^{*}(x;y)=(2\pi )^{-2}x^{\frac 98}\sum_{y<n\leq
2k y}\frac{c_n}{n^{7/8}} \cos(8\pi\sqrt[4]{nx}-\pi/4),\\
&& \mathcal{R}_{22}^{*}(x;y)=(2\pi )^{-2}x^{\frac 98}\sum_{2k y<n\leq
T}\frac{c_n}{n^{7/8}} \cos(8\pi\sqrt[4]{nx}-\pi/4).
\end{eqnarray*}

Similar to (4.1) of \cite{TZZ}, we can write
\begin{equation}
\mathcal{R}_1^{k-1}(x;y)\mathcal{R}_{21}^{*}(x;y)=\frac{1}{(2\pi)^{2k}2^{k-1}}\left(S_3(x)+S_4(x)\right),
\end{equation}
where
\begin{align*}
S_3(x):&= x^{9k/8}\sum_{\boldsymbol{i}\in {I}^{k-1}}\cos\left(-\frac{\pi\beta(\boldsymbol{i})}{4}
         \right) \sum_{\begin{subarray}{c} y<n_1<2ky,n_j\leq y,2\leq j\leq k \\
                       \alpha(\boldsymbol{n};\boldsymbol{i})=0\end{subarray}}
       \frac{ c_{n_1}\cdots c_{n_k}}{(n_1\cdots n_k)^{7/8}},\\
S_4(x):&= x^{9k/8}\sum_{\boldsymbol{i}\in  {I}^{k-1}}
        \sum_{\begin{subarray}{c} y<n_1<2ky,n_j\leq y,2\leq j\leq k \\
              \alpha(\boldsymbol{n};\boldsymbol{i})\not= 0 \end{subarray}}
       \frac{ c_{n_1}\cdots c_{n_k}}{(n_1\cdots n_k)^{7/8}} \cos\left(
        8\pi\alpha(\boldsymbol{n};\boldsymbol{i})\sqrt[4]x-\frac{\pi\beta(\boldsymbol{i})}{4}\right) .
\end{align*}

Similar to (4.14) of \cite{TZZ}, we have
\begin{align}
\int_T^{2T}S_3(x)dx\ll T^{1+\frac{9k}{8}+\varepsilon}y^{-\frac 34}.
\end{align}

Similar to (3.8), we have
\begin{align}
\int_T^{2T}S_4(x)dx\ll T^{3/4+9k/8+\varepsilon}  y^{4^{k-2}-5/4+k/8}.
\end{align}

Similar to (3.11), we have
\begin{equation}
\mathcal{R}_1^{k-1}(x;y)\mathcal{R}_{22}^{*}(x;y)=\frac{1}{(2\pi)^{2k}2^{k-1}} S_5(x) ,
\end{equation}
where
\begin{align*}
S_5(x):&= x^{9k/8}\sum_{\boldsymbol{i}\in  {I}^{k-1}}
        \sum_{\begin{subarray}{c} 2ky<n_1\leq T,n_j\leq y,2\leq j\leq k \\
              \alpha(\boldsymbol{n};\boldsymbol{i})\not= 0 \end{subarray}}
       \frac{ c_{n_1}\cdots c_{n_k}}{(n_1\cdots n_k)^{7/8}} \cos\left(
        8\pi\alpha(\boldsymbol{n};\boldsymbol{i})\sqrt[4]x-\frac{\pi\beta(\boldsymbol{i})}{4}\right).
\end{align*}
Note that in this case we always have $|\alpha(\boldsymbol{n};\boldsymbol{i})|\gg n_1^{1/4}.$ By Lemma 2.3  we have
\begin{eqnarray*}
\int_T^{2T}S_5(x)dx&&\ll T^{3/4+9k/8}\sum_{2ky<n_1\leq T}\sum_{n_2\leq y,\cdots, n_k\leq y}\frac{c_{n_1}c_{n_2}\cdots c_{n_k}}{n_1^{9/8}(n_2\cdots n_k)^{7/8}}\\
&&\ll T^{3/4+9k/8+\varepsilon}y^{(k-2)/8}.
\end{eqnarray*}
Thus we have
\begin{eqnarray}
\int_T^{2T}   \mathcal{R}_1^{k-1}(x;y)\mathcal{R}_{22}^{*}(x;y)dx  \ll T^{3/4+9k/8+\varepsilon}y^{(k-2)/8}.
\end{eqnarray}

Collecting  (3.10)-(3.16) we get
\begin{eqnarray}
\int_T^{2T} \mathcal{R}_1^{k-1}(x;y) \mathcal{R}_2(x;y)dx&&\ll  T^{1+\frac{9k}{8}-\frac
18+\varepsilon}+T^{1+\frac{9k}{8}+\varepsilon}y^{-\frac 34}+T^{\frac 34+\frac{9k}{8}+\varepsilon}  y^{4^{k-2}-\frac 54+\frac k8}\nonumber\\
&&\ll T^{1+\frac{9k}{8}+\varepsilon}y^{-\frac 34}+T^{\frac 34+\frac{9k}{8}+\varepsilon}  y^{4^{k-2}-\frac 54+\frac k8}.
\end{eqnarray}

\subsection{\bf Proof of Theorem for the case} $k=3$

We have
\begin{eqnarray}
\Delta_1^3(x;\varphi)&&=(\mathcal{R}_1(x;y)+ \mathcal{R}_2(x;y ))^3\\
&&=\mathcal{R}_1^3(x;y)+3\mathcal{R}_1^2(x;y)\mathcal{R}_2(x;y)+3\mathcal{R}_1(x;y)\mathcal{R}_2^2(x;y)+\mathcal{R}_2^3(x;y).\nonumber
\end{eqnarray}

Taking $k=3$ in (3.9) we have
\begin{eqnarray}
\int_T^{2T}\mathcal{R}_1^3(x;y)dx=\frac{B_3(c)}{2^8\pi^6}\int_T^{2T}x^{\frac{27}{8}}dx+O\left( T^{ \frac{35}{8}+\varepsilon}y^{-\frac 34} \right)
  +O\left(T^{33/8+\varepsilon} y^{25/8}\right).
\end{eqnarray}

Taking $k=3$ in (3.10) we have
\begin{eqnarray}
\int_T^{2T} \mathcal{R}_1^2(x;y) \mathcal{R}_2(x;y)dx \ll   T^{ \frac{35}{8}+\varepsilon}y^{-\frac 34}+T^{33/8+\varepsilon} y^{25/8}.
\end{eqnarray}

From Lemma 2.6 with $\ell=1$, (2.4) of Lemma 2.7 and Cauchy's inequality we get
\begin{eqnarray}
\int_T^{2T} \mathcal{R}_1 (x;y) \mathcal{R}_2^2(x;y)dx \ll   T^{ \frac{35}{8}+\varepsilon}y^{-\frac 34}.
\end{eqnarray}

From (3.18)-(3.21) and (2.3) of Lemma 2.7 we get that
\begin{eqnarray}
\int_T^{2T}\Delta_1^3(x;\varphi)dx&&=\frac{B_3(c)}{2^8\pi^6}\int_T^{2T}x^{\frac{27}{8}}dx+O\left( T^{ \frac{35}{8}+\varepsilon}y^{-\frac 34} \right)
  +O\left(T^{33/8+\varepsilon} y^{25/8}\right)\nonumber\\
  &&=\frac{B_3(c)}{2^8\pi^6}\int_T^{2T}x^{\frac{27}{8}}dx+O\left( T^{ \frac{35}{8}-\frac{3}{62}+\varepsilon}  \right)
\end{eqnarray}
by choosing $y=T^{2/31}.$ Now the case $k=3$ of Theorem follows from (3.22) by a splitting argument.

\subsection{\bf Proof of Theorem for the case} $k=4$

We have
\begin{eqnarray}
\Delta_1^4(x;\varphi) =\mathcal{R}_1^4(x;y)+4\mathcal{R}_1^3(x;y)\mathcal{R}_2(x;y)+O(\mathcal{R}_1^2(x;y)\mathcal{R}_2^2(x;y)
 +\mathcal{R}_2^4(x;y)).
\end{eqnarray}

Taking $k=4$ in (3.9) we have
\begin{eqnarray}
\int_T^{2T}\mathcal{R}_1^4(x;y)dx=\frac{B_4(c)}{2^{11}\pi^8}\int_T^{2T}x^{\frac{9}{2}}dx+O\left( T^{ \frac{11}{2}+\varepsilon}y^{-\frac 34} \right)
  +O\left(T^{21/4+\varepsilon} y^{61/4}\right).
\end{eqnarray}

Taking $k=4$ in (3.17) we have
\begin{eqnarray}
\int_T^{2T} \mathcal{R}_1^3(x;y) \mathcal{R}_2(x;y)dx \ll   T^{ \frac{11}{2}+\varepsilon}y^{-\frac 34}+T^{21/4+\varepsilon} y^{61/4}.
\end{eqnarray}

From   Lemma 2.6 with $\ell=2$, (2.4) of Lemma 2.7 and Cauchy's inequality we get
\begin{eqnarray}
\int_T^{2T} \mathcal{R}_1^2 (x;y) \mathcal{R}_2^2(x;y)dx \ll   T^{ \frac{11}{2}+\varepsilon}y^{-\frac 34}.
\end{eqnarray}

From (3.23)-(3.26) and (2.4) of Lemma 2.7 we get that
\begin{eqnarray}
\int_T^{2T}\Delta_1^4(x;\varphi)dx&&=\frac{B_4(c)}{2^{11}\pi^8}\int_T^{2T}x^{\frac{9}{2}}dx+O\left( T^{ \frac{11}{2}+\varepsilon}y^{-\frac 34} \right)
  +O\left(T^{21/4+\varepsilon} y^{61/4}\right)\nonumber\\
  &&=\frac{B_4(c)}{2^{11}\pi^8}\int_T^{2T}x^{\frac{9}{2}}dx+O\left( T^{ \frac{11}{2}-\frac{3}{256}+\varepsilon}  \right)
\end{eqnarray}
by choosing $y=T^{1/64}.$ Now the case $k=4$ of Theorem follows from (3.27) by a splitting argument.

\subsection{\bf Proof of Theorem for the case} $k=5$

We have
\begin{eqnarray}
\Delta_1^5(x;\varphi) =\mathcal{R}_1^5(x;y)+4\mathcal{R}_1^4(x;y)\mathcal{R}_2(x;y)+O(|\mathcal{R}_1^3(x;y)|\mathcal{R}_2^2(x;y)
 +|\mathcal{R}_2^5(x;y)|).
\end{eqnarray}

Taking $k=5$ in (3.9) we have
\begin{eqnarray}
\int_T^{2T}\mathcal{R}_1^5(x;y)dx=\frac{B_5(c)}{2^{14}\pi^{10}}\int_T^{2T}x^{\frac{45}{8}}dx+O\left( T^{ \frac{53}{8}+\varepsilon}y^{-\frac 34} \right)
  +O\left(T^{51/8+\varepsilon} y^{507/8}\right).
\end{eqnarray}

Taking $k=5$ in (3.17) we have
\begin{eqnarray}
\int_T^{2T} \mathcal{R}_1^4(x;y) \mathcal{R}_2(x;y)dx \ll   T^{ \frac{53}{8}+\varepsilon}y^{-\frac 34}+T^{51/8+\varepsilon} y^{507/8}.
\end{eqnarray}

From   Lemma 2.6 with $\ell=3$, (2.4) of Lemma 2.7 and Cauchy's inequality we get
\begin{eqnarray}
\int_T^{2T} |\mathcal{R}_1^3(x;y)| \mathcal{R}_2^2(x;y)dx \ll   T^{ \frac{53}{8}+\varepsilon}y^{-\frac 34}.
\end{eqnarray}

From (3.28)-(3.31) and (2.5) of Lemma 2.7 we get that
\begin{eqnarray}
\int_T^{2T}\Delta_1^5(x;\varphi)dx&&=\frac{B_5(c)}{2^{14}\pi^{10}}\int_T^{2T}x^{\frac{45}{8}}dx+O\left( T^{ \frac{53}{8}+\varepsilon}y^{-\frac 38} \right)
  +O\left(T^{51/8+\varepsilon} y^{507/8}\right)\nonumber\\
  &&=\frac{B_5(c)}{2^{14}\pi^{10}}\int_T^{2T}x^{\frac{45}{8}}dx+O\left( T^{ \frac{53}{8}-\frac{1}{680}+\varepsilon}  \right)
\end{eqnarray}
by choosing $y=T^{1/255}.$ Now the case $k=5$ of Theorem follows from (3.32) by a splitting argument.

\bigskip
 \bigskip

\bigskip
\bigskip
\bigskip

\bigskip
\end{document}